\newtheorem{thm}{Theorem}
\newtheorem{lma}{Lemma}
\newtheorem{prop}{Proposition}
\newtheorem{cor}{Corollary}
\theoremstyle{definition}
\theoremstyle{remark}
\newtheorem{remark}{Remark}
\newcommand{\R}{\mathbb R}
\newcommand{\be}{\begin{equation}}
\newcommand{\ee}{\end{equation}}
\newcommand{\bee}{\begin{equation*}}
\newcommand{\eee}{\end{equation*}}
\def\p{\partial}
\def\lf{\left}
\def\ri{\right}
\def\Pi{\displaystyle{\mathbb{II}}}
\def\a{\alpha}
\def\ccs{c_{\rm CS}}
\def\bccs{\mathbf{  c}_{\rm CS}}
\def\bchy{\mathbf{  c}_{\rm HY}}
\begin{document}

\title[]{A note   on center of mass}

\author{Pak-Yeung Chan}
\address{Department of Mathematics,
The Chinese University of Hong Kong} \email{pychan@math.cuhk.edu.hk}

\author{Luen-Fai Tam$^1$}

\thanks{$^1$Research partially supported by Hong Kong RGC General Research Fund  \#CUHK 403108}

\address{The Institute of Mathematical Sciences and Department of
 Mathematics, The Chinese University of Hong Kong,
Shatin, Hong Kong, China.} \email{lftam@math.cuhk.edu.hk}

\begin{abstract} We will discuss existence of center of mass on asymptotically Schwarzschild manifold defined by Huisken-Yau \cite{Huisken-Yau1996} and Corvino-Schoen \cite{CorvinoSchoen2006}.
Conditions of existence and   examples on non existence are given.

\end{abstract}
\date{February, 2014}
\maketitle
\markboth{Pak-Yeung Chan and Luen-Fai Tam} {A note on center of mass}

\S{\bf 1.} Let $(M^3,g)$ be an asymptotically Schwarzschild (AS) manifold. That is: $M$ is diffeomorphic to $\R^3\setminus B(R)$ with metric $g$   given by
\be\label{eq-AS-1}
g_{ij}=\lf(1+\frac{m}{2r}\ri)^4\delta_{ij}+p_{ij}
\ee
where $p_{ij}=O_4(r^{-2})$, where $m>0$ is a constant  is the ADM mass of the manifold. Here $r=|x|$. The notation $\phi=O_k(r^\a)$ means that $|\p^{(i)}\phi|\le Cr^{\a-i}$ for some constant $C$ for all $0\le i\le k$. We assume $g$ is extended smoothly to the whole $\R^3$.

In \cite{Huisken-Yau1996}, Huisken-Yau proved the existence and uniqueness of constant mean curvature stable foliation $\{\Sigma_r\}$ which are perturbation of the coordinate spheres. Let $F(r)$ be the embedding of $\Sigma_r$ in $M$. The Husiken-Yau center of mass is defined as follow:
Let
\be\label{eq-HY-1}
\bchy(r)=\frac{\int_{\Sigma_r}F(r) d\sigma_0}{\int_{\Sigma_r} d\sigma_0}
\ee
where $d\sigma_0$ is the area element induced by the Euclidean metric. The  Husiken-Yau center of mass $\bchy$ is defined as:
\be\label{eq-HY-2}
\bchy=\lim_{r\to\infty}\bchy(r)
\ee
provided the limit exists.

In \cite{CorvinoSchoen2006} there is another definition of center of mass defined by   Corvino-Schoen. Let
 \be\label{eq-CS-1}
\begin{split}
\ccs^\a(r)=&\frac1{16\pi m}\int_{|x|=r}x^\a \lf[ (g_{ij,i}-g_{ii,j})\nu_g^j-\lf(h_{i\a}\nu_g^i-h_{ii}\nu_g^\a\ri)\ri]d\sigma_g\\
\end{split}
\ee
where $h_{ij}=g_{ij}-\delta_{ij}$ and $\nu_g$ is the unit outward normal of $\{|x|=r\}$ with respect to $g$.
Let $\bccs(r)=(\ccs^1(r),\ccs^2(r),\ccs^3(r))$. The Corvino-Schoen center of mass is given by
\be\label{eq-CS-2}
\bccs=\lim_{r\to\infty}\bccs(r)
\ee
provided the limit exists. Note that
\be
\begin{split}
\ccs^\a(r)=&\frac1{16\pi m}\int_{|x|=r}x^\a \lf[ (g_{ij,i}-g_{ii,j})\nu_0^j-\lf(h_{i\a}\nu_0^i-h_{ii}\nu_0^\a\ri)\ri]d\sigma_0+O(r^{-1}).\\
\end{split}
\ee
where $\nu_0$ is the unit outward normal of $\{|x|=r\}$ with respect to Euclidean metric.

In this note we want to discuss the existence of $\bchy$ and $\bccs$. By the result of Huang \cite{Huang2009}, the two concepts are basically the same, see also \cite{CorvinoWu2008}. Before we state the precise statement of the result, we will use the foliation constructed by Ye \cite{Ye1996}   which is the same as that by Husiken-Yau near infinity by uniqueness. The foliation constructed by Ye is as follows. For $r>0$ large enough, we can find a perturbed center $\tau(r)\in \R^3$ and a function $\phi^{(r)}(z) $ on the unit sphere $\mathbb{S}^2$ such that the surface
\be\label{eq-ye-foliation}
\Sigma_r=\{r\lf(z+\tau(r)+\phi^{(r)}(z)\nu_0(z)\ri)|\ z\in \mathbb{S}^2\}
\ee
has constant mean curvature $\frac 2r-\frac{4m}{r^2}$. Here $\nu_0$ is the unit outward normal of unit sphere $\mathbb{S}^2$ in $\R^3$. Note that by \cite{Ye1996}, $|\tau(r)|\le Cr^{-1}$ and $|\phi^{(r)}|\le Cr^{-2}$. Define $\bchy(r)$ as in \eqref{eq-CS-1}. Then $\bchy$ is $\lim_{r\to\infty}\bchy(r)$, provided it exists. Huang \cite{Huang2009} proved the following:

\begin{prop}\label{Huang}
$$
\lim_{r\to\infty}\lf(\bccs(r)-\bchy(r)\ri)=\mathbf{0}.
$$
\end{prop}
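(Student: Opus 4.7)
The plan is to expand both $\bchy(r)$ and $\bccs(r)$ to leading order using respectively the Ye parametrization \eqref{eq-ye-foliation} and the metric expansion \eqref{eq-AS-1}, and to show that each equals $r\tau(r)$ modulo $o(1)$. From $F(r)(z)=r(z+\tau(r)+\phi^{(r)}(z)\nu_0(z))$ together with the Ye bounds $|\tau(r)|\le Cr^{-1}$ and $|\phi^{(r)}|\le Cr^{-2}$ (the latter in an appropriate $C^k$ norm via elliptic regularity applied to the CMC equation), the Euclidean area form of $\Sigma_r$ pulled back to $\mathbb{S}^2$ is $r^2(1+O(r^{-2}))\,d\sigma_{\mathbb{S}^2}$. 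Since $\int_{\mathbb{S}^2}z^\a\,d\sigma_{\mathbb{S}^2}=0$ and $r\phi^{(r)}=O(r^{-1})$, a direct expansion of the numerator and denominator of \eqref{eq-HY-1} yields $\bchy(r)=r\tau(r)+o(1)$.

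Next I would identify $r\tau(r)$ with a flux integral built from $p_{ij}$. Consider the mean curvature of the translated perturbed sphere $\{r(z+y+\phi(z)\nu_0(z))\}$ in the full AS metric $g=g^{Sch}+p$. The translation $y$ contributes an $\ell=1$ mode proportional to $(m/r^3)\,y\cdot z$ to the mean curvature, coming from breaking the rotational symmetry of Schwarzschild about the new center; the perturbation $p_{ij}$ contributes an $\ell=1$ mode of size $O(r^{-3})$ to the mean curvature; and $\phi^{(r)}$ contributes nothing to the $\ell=1$ modes at leading order, because the $\ell=1$ spherical harmonics lie in the kernel of the Euclidean CMC linearization $\Delta_{\mathbb{S}^2}+2$ (reflecting translation invariance of the Euclidean background). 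Requiring that the total mean curvature be the constant $\frac{2}{r}-\frac{4m}{r^2}$ therefore determines $\tau(r)$ as a specific multiple of the $\ell=1$ projection of the $p$-induced mean curvature correction; using the Codazzi identity to rewrite this $\ell=1$ mode as a surface integral of first derivatives of $p_{ij}$ over $\{|x|=r\}$ gives $r\tau(r)$ in the same form as the Corvino-Schoen integrand.

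For $\bccs(r)$, expand \eqref{eq-CS-1} directly using $g_{ij}=u^4\delta_{ij}+p_{ij}$ with $u=1+m/(2r)$. The pure Schwarzschild piece yields a spherically symmetric integrand weighted by $x^\a$, integrating to zero on $\{|x|=r\}$. The $p_{ij}$-contribution, evaluated using the leading $r^{-2}$ behavior of $p$ and integrated by parts against the Codazzi identity, matches the flux integral obtained for $r\tau(r)$ above, yielding $\bccs(r)=r\tau(r)+o(1)$. Combined with the first step, this gives the claimed $\bccs(r)-\bchy(r)\to\mathbf{0}$.

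The main obstacle is the precise bookkeeping in the matching step: both $r\tau(r)$ and $\bccs(r)$ are $O(1)$ residues arising from leading-order cancellations, so one must carefully track every factor of $m$ and every integration-by-parts boundary term to verify exact cancellation rather than a nonzero multiple of a first spherical harmonic moment of $p$. The prefactor $(16\pi m)^{-1}$ in \eqref{eq-CS-1} is precisely what compensates the $m/r^3$ coefficient governing the response of $\tau(r)$ to a first-harmonic mean-curvature perturbation, and verifying this numerical matching via Gauss-Codazzi on $\Sigma_r$ is the heart of Huang's argument in \cite{Huang2009}.
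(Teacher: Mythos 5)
Your proposal is correct and follows essentially the same route as the paper's sketch of Huang's argument: you first show $\bchy(r)=r\tau(r)+o(1)$ from the Ye parametrization, and then identify $r\tau(r)$ with $\bccs(r)$ by projecting the CMC condition onto the first spherical harmonics (the kernel of $\Delta_{\mathbb{S}^2}+2$), which is precisely the content of Ye's balance equation \cite[(1.14)]{Ye1996} and of the mean-curvature expansion and Lemma 6.1 of \cite{Huang2009} that the paper cites. The coefficient bookkeeping you defer (matching the $6m$ in the balance equation against the $(16\pi m)^{-1}$ normalization in \eqref{eq-CS-1}, carried out by a divergence-theorem manipulation of the $q_{ij}$-terms rather than by Codazzi per se) is exactly what those cited results supply, so your outline coincides with the paper's proof in substance.
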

\begin{proof} We sketch the proof here. Let   $y=x-\frac{\tau}r$, and $y=\frac{z}r$, $z\in \mathbb{S}^2$. So $x=\frac1r(z+\tau)$. Let $\Sigma_r$ be as in \eqref{eq-ye-foliation}. Then one can check that,  using the fact that $|\phi^{(r)}|=O(r^{-1})$, one can check that
\be\label{eq-Huang-1}
\lim_{r\to\infty} (r\tau(r)-\bchy(r))=\mathbf{0}
\ee
On the other hand, by \cite[(1.14)]{Ye1996}, for $\a=1,2,3$, $\tau(r)$ satisfies:
\be\label{eq-Huang-2}
6mr\tau^\a+  P_\a\lf(rf(r,z,\tau)+rb_{ij}(z,\tau)\tau^i\tau^j+w \ri)=0
\ee
where $b_{ij}$ is smooth in $(z,\tau)$, $z\in \mathbb{S}^2$, $|w|=O(r^{-1})$, $P_\a$ is the $L^2$ projection of a function on $ \mathbb{S}^2$ to the linear space spanned by $z^\a$ and $f$ is given by
\be\label{eq-Huang-3}
H(r,\tau(r),0)=\frac2r+\frac {4m^2}{r^2}+ \frac{6m z\cdot \tau}{r^2}+\frac1{r^2}f(r,z,\tau(r))+O(r^{-4})
\ee
and $H(r,\tau,0)$ is the mean curvature of the surface$\{|x-  r\tau  |=r\}$. Let $y=x-r\tau$, $z=\frac y r$. Then $H(r,\tau,0)$  is given by (see \cite[(6.1)]{Huang2009})
   \be
   \begin{split}
   H(r,\tau(r),0)=&\frac2{r}-\frac{4m}{r^2}+\frac{6m z\cdot \tau}{r^2}+\frac{9m^2}{r^3}\\
   &+\frac1{2r^3}q_{ij,k}(y) y^iy^jy^k+\frac{2}{r^3}q_{ij}(y)y^iy^j\\
   &-\frac1 r \lf(q_{ij,i}(y)y^j-q_{ii}(y)+\frac12 q_{ii,j}(y)y^j\ri)+E\\
   \end{split}
   \ee
   where $E=O(r^{-4})$,  $q_{ij}=p_{ij}+\lf(1+\frac{m}{2r}\ri)^4\delta_{ij}-\lf(1+\frac{2m}r\ri)\delta_{ij}$.

Hence by \cite[Lemma 6.1]{Huang2009}
   \be\begin{split}
   P_\a(rf)=&\frac {3\pi}4
   \int_{|z|=1}z^\a rf  d\sigma_0\\
   =&\int_{|z|=1}z^\a r^3\lf(H(r,\tau(r),0)-\frac2r-\frac {4m^2}{r^2}-\frac{6m   z\cdot \tau}{r^2}\ri)d\sigma_0+O(r^{-1})\\
   =&-6m \, \ccs^\a(r)+O(r^{-1}).
   \end{split}
   \ee
   Combining with \eqref{eq-Huang-1} and \eqref{eq-Huang-2}, the result follows.

\end{proof}

   Next we will give condition so that $\bccs$ and hence $\bchy$ exists. The following result is a direct consequence of the computation in \cite[section 5]{Corvino2000} by Corvino and
   \cite[p. 215]{CorvinoSchoen2006} by Corvino-Schoen. However, we would like to state the result explicitly.
\begin{thm}\label{thm-CS}
$\bccs$ exists if and only if
$
\lim_{r\to\infty}\int_{B(r)} x^\a R_g dv_g$ exists for $\a=1, 2, 3$.
\end{thm}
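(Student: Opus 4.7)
The plan is to apply two successive divergence theorems to convert the Corvino--Schoen boundary integral \eqref{eq-CS-1} into a bulk integral whose dominant piece is $x^\alpha R_g$, and then to verify that the remaining error terms contribute only an $r$-independent constant plus an $o(1)$ quantity. The equivalence with $\lim\int_{B(r)} x^\alpha R_g\,dv_g$ then follows at once.

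The computational heart is the identity
$$\partial_j\big[x^\alpha(g_{ij,i}-g_{ii,j})\big] = (g_{i\alpha,i}-g_{ii,\alpha}) + x^\alpha L(h),$$
where $L(h)=\partial_i\partial_j h_{ij}-\Delta h_{ii}$ is the linearization of scalar curvature at $\delta$. Applying the divergence theorem on a shell $B(r)\setminus B(r_0)$ to $x^\alpha(g_{ij,i}-g_{ii,j})\nu_0^j$ and separately to $(h_{i\alpha}\nu_0^i-h_{ii}\nu_0^\alpha)$ (the latter bulk integral being exactly $\int(g_{i\alpha,i}-g_{ii,\alpha})\,dv_0$) and subtracting, the two bulk contributions cancel apart from $x^\alpha L(h)$; combined with the $O(r^{-1})$ correction in \eqref{eq-CS-1} this produces
$$16\pi m\,\ccs^\alpha(r) = \int_{B(r)\setminus B(r_0)} x^\alpha L(h)\, dv_0 + C(r_0) + O(r^{-1}),$$
with $C(r_0)$ independent of $r$.

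Next I would use the expansion $R_g = L(h)+Q(h)$ with $Q(h)$ at least quadratic in $h$ and its first derivatives, and replace $L(h)$ by $R_g-Q(h)$. To treat the $Q$-integral, I would decompose $h=h^{\rm Sch}+p$ with $h^{\rm Sch}_{ij}=\big((1+\frac{m}{2r})^4-1\big)\delta_{ij}$ spherically symmetric. Then $Q(h^{\rm Sch})$ is a rotation-invariant scalar, hence a radial function, so $\int_{|x|=r}x^\alpha Q(h^{\rm Sch})\,d\sigma_0=0$ on every coordinate sphere. Each mixed term carries a factor of $\partial p = O(r^{-3})$ (or $p=O(r^{-2})$), and pure $p$ terms decay even faster; after multiplication by $x^\alpha$ they are $O(r^{-4})$ or better, hence integrable on $\R^3$. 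Finally $dv_g-dv_0=O(r^{-1})\,dv_0$ together with $R_g=O(r^{-4})$ makes $\int_{B(r)} x^\alpha R_g(dv_g-dv_0)$ absolutely convergent, so switching to $dv_g$ is harmless.

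The main obstacle is precisely the spherical-symmetry cancellation in the Schwarzschild part of $Q(h)$: a generic $Q(h)=O(r^{-4})$ multiplied by $x^\alpha$ gives only the borderline non-integrable decay $O(r^{-3})$, which would otherwise produce a logarithmic divergence unless the angular average vanishes. Once this cancellation is verified in detail, assembling the above pieces yields
$$16\pi m\,\ccs^\alpha(r)=\int_{B(r)} x^\alpha R_g\,dv_g+\text{constant}+o(1),$$
from which Theorem \ref{thm-CS} follows immediately.
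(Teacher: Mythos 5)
Your proposal is correct, and it reaches the same final identity as the paper, $16\pi m\,\ccs^\a(r)=\int_{B(r)}x^\a R_g\,dv_g+\text{constant}+o(1)$, but by a genuinely different organization of the computation. The paper starts from the bulk integral $\int_{B(R)}x^\a R_g\,dv_g$ and compares the full nonlinear Christoffel symbols of $g$ with those of the Schwarzschild background $\bar g_{ij}=\lf(1+\frac{m}{2r}\ri)^4\delta_{ij}$: its key estimates are $\Gamma^k_{ij}-\bar\Gamma^k_{ij}=\frac12\lf(p_{ik,j}+p_{kj,i}-p_{ij,k}\ri)+O(r^{-4})$ and the fact that the difference of the quadratic Christoffel products is $O(r^{-5})$, so after integrating by parts outward everything is expressed through $p_{ij}=O_4(r^{-2})$, with the radial $\bar g$-contributions to the resulting \emph{boundary} integrals dropping out by odd symmetry. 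You instead start from the boundary expression and integrate by parts inward via the flat-space identity $\p_j\lf[x^\a(g_{ij,i}-g_{ii,j})\ri]=(g_{i\a,i}-g_{ii,\a})+x^\a L(h)$, then convert $L(h)$ to $R_g-Q(h)$; your key lemma is the \emph{bulk} parity cancellation $\int_{|x|=r}x^\a Q(h^{\mathrm{Sch}})\,d\sigma_0=0$, which indeed holds because $Q(h^{\mathrm{Sch}})=R_{\bar g}-L(h^{\mathrm{Sch}})$ is radial (for $h^{\mathrm{Sch}}_{ij}=\psi(r)\delta_{ij}$ one has $L(h^{\mathrm{Sch}})=-2\Delta\psi$), while the mixed and pure-$p$ parts of $Q(h)$ are $O(r^{-5})$ and hence integrable against $x^\a$ --- you correctly identified this borderline $O(r^{-3})$ issue as the crux, and your resolution is sound. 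Both arguments thus rest on the same two mechanisms (Schwarzschild subtraction rendering errors integrable, and vanishing of spherical means of radial functions against $x^\a$), but you place the symmetry cancellation in the quadratic remainder of the scalar-curvature expansion, whereas the paper places it in the boundary terms after a Christoffel-level comparison; your formulation via $R_g=L(h)+Q(h)$ is closer in spirit to the original Corvino--Schoen and Huang computations that the paper cites, and it extends more readily to asymptotically flat metrics satisfying the Regge--Teitelboim parity condition (a generalization the paper itself attributes to Huang), while the paper's route keeps the whole argument explicit in terms of $\Gamma^k_{ij}-\bar\Gamma^k_{ij}$. The only items you take on faith are standard and available in the paper's setup: the displayed $O(r^{-1})$ relation between the $d\sigma_g$- and $d\sigma_0$-versions of $\ccs^\a(r)$, and $R_g=O(r^{-4})$, the latter of which also follows from your own decomposition since $L(h^{\mathrm{Sch}})$, $L(p)$, and $Q(h)$ are all $O(r^{-4})$.
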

\begin{proof}
   $$
R_{ij}=\p_k\Gamma^k_{ji}-\p_j\Gamma^k_{ki}+
\Gamma^k_{kl}\Gamma^l_{ji}-\Gamma^k_{jl}\Gamma^l_{ki}.
$$
On an AS manifold,
$$
g_{ij}=\lf(1+\frac { m}{2r}\ri)^4\delta_{ij}+p_{ij}
$$
with $p_{ij}=O_4(r^{-2})$. Let
$$
\bar g_{ij}=\lf(1+\frac { m}{2r}\ri)^4\delta_{ij}
$$
and let $\Gamma_{ij}^k,  \bar \Gamma_{ij}^k$ be the Christoffel symbols for $g$ and  $\bar g$ respectively. Extend $(1+\frac {2m}r)$ as a positive function up to the origin, and denote it by $u$.
Then by \cite{Huisken-Yau1996} and direct computations, we have
\be\label{eq-curvature-0}
\begin{split}
\Gamma_{ij}^k-\bar \Gamma_{ij}^k=&\frac12 \bar g^{sk}\lf(p_{is,j}+p_{sj,i}-p_{ij,s}\ri)+\frac12
\lf(g^{sk}-\bar g^{sk}\ri)\Gamma_{ij}^k\\
=&\frac12 \lf(p_{ik,j}+p_{kj,i}-p_{ij,k}\ri)+O(r^{-4})
\end{split}
\ee
Hence
\be\label{eq-curvature-1}
|\Gamma_{ij}^k-\bar \Gamma_{ij}^k|=O(r^{-3}),|\p (\Gamma_{ij}^k-\bar \Gamma_{ij}^k)|+ |R_{ij}-\bar R_{ij}|=O(r^{-4}).
\ee
 In particular, $|R_g|=O(r^{-4})$ because the scalar curvature $R_{\bar g}$ of $\bar g$ is 0 near infinity. Let $dv_0$ be the Euclidean volume element.
\bee
\begin{split}
\int_{B(R)}x^\a R_gdv_g=&\int_{B(R)} x^\a R_gdv_0 +\int_{B(R)}Edv_0\\
=&\int_{B(R)} x^\a g^{ij}R_{ij}dv_0 +\int_{B(R)}Edv_0\\
=&\int_{B(R)} x^\a \lf(g^{ij}R_{ij}-\bar g^{ij}\bar R_{ij}\ri)dv_0 +C+\int_{B(R)}Edv_0\\
=&\int_{B(R)} x^\a \bar g^{ij}\lf( R_{ij}- \bar R_{ij}\ri)dv_0+C+\int_{B(R)}Edv_0\\
=&\int_{B(R)} x^\a u^4(r)   \sum_i(R_{ii}- \bar R_{ii})dv_0+C+\int_{B(R)}Edv_0
\end{split}
\eee
if $R$ is large, where $C$ is a constant independent of $R$. Here and below $E$ always denote a function with $E=O(r^{-4})$. Now
\bee
\lf(\Gamma^k_{kl}\Gamma^l_{ji}-\Gamma^k_{jl}\Gamma^l_{ki}\ri)-\lf(\bar\Gamma^k_{kl}\bar\Gamma^l_{ji}-
\bar\Gamma^k_{jl}\bar\Gamma^l_{ki}\ri)=O(r^{-5}).
\eee
Hence
\be
\begin{split}
&\int_{B(R)}x^\a R_gdv_g\\=&\int_{B(R)} x^\a   \sum_i\lf[\p_k\lf(\Gamma^k_{ii}-\bar \Gamma^k_{ii}\ri)-\p_i\lf(\Gamma^k_{ki}-\bar \Gamma^k_{ki}\ri)\ri] dv_0+C+\int_{B(R)}Edv_0\\
=&\int_{\p B(R)}x^\a\lf[  \sum_{i,k}(\Gamma^k_{ii}-\bar \Gamma^k_{ii}) \nu_0^k- \sum_{i,k}(\Gamma^k_{ki}-\bar \Gamma^k_{ki})\nu_0^i\ri]\\
&-\int_{ B(R)} \lf[  \sum_i(\Gamma^\a_{ii}-\bar \Gamma^\a_{ii})  - \sum_k (\Gamma^k_{k\a}-\bar \Gamma^k_{k\a})\ri]dv_0+C+\int_{B(R)}Edv_0\\
=& \int_{\p B(R)}x^\a\sum_{i,k}\lf(p_{ik,i}-p_{ii,k}\ri)\nu_0^k
-\int_{ B(R)}\sum_{i} \lf(p_{i\a,i}-p_{ii,\a}\ri)dv_0+C+\int_{B(R)}Edv_0\\
=&\int_{\p B(R)}x^\a\lf[\sum_{i,k}\lf(p_{ik,i}-p_{ii,k}\ri)\nu_0^k-\int_{\p B(R)} \sum_{i}\lf(p_{i\a}\nu_0^i-p_{ii}\nu_0^\a\ri) \ri]dv_0+C+\int_{B(R)}Edv_0\\
=&\int_{\p B(R)}x^\a\lf[\sum_{i,k}\lf(g_{ik,i}-g_{ii,k}\ri)\nu_0^k-\int_{\p B(R)} \sum_{i}\lf(h_{i\a}\nu_0^i-h_{ii}\nu_0^\a\ri) \ri]+C+\int_{B(R)}Edv_0,
\end{split}
\ee
where $C$ is a constant and $E=O(r^{-4})$. From this it is easy to see the theorem is true.
\end{proof}

As remark by Huang \cite{Huang}, the result is still true for asymptotically flat metric satisfying Regge-Teitelboim parity condition.

By the theorem, one may expect there are examples of AS metric so that $\bccs$ and hence $\bchy$ does not exist. In fact, one may construct such examples in a more direct way.  To motivate the construction, let $\mathbf{b}$ be a nonzero vector in $\R^3$ and let $g$ be the metric given by

 \be\label{eq-AS-2}
g_{ij}=\lf(1+\frac{m}{2r}+\frac{\mathbf{b}\cdot \mathbf{x}}{r^3}\ri)^4\delta_{ij}
\ee
with $m>0$.
 Then it is well-known that the Corvino-Schoen center of mass for this metric is given by
 $$
 \bccs=\frac{2\mathbf{b}}m.
 $$
 Let $\phi:[a,\infty)\to \R$ be a smooth bounded function. Consider the metric
  \be\label{eq-AS-3}
g_{ij}=\lf(1+\frac{m}{2r}+\frac{\phi(r)\mathbf{b}\cdot \mathbf{x}}{r^3}\ri)^4\delta_{ij}
\ee
with $m>0$. If $\phi(t)$ is oscillating near infinity, then one may expect that Corvino-Schoen center of mass does not exist. More precisely, we have the following:
\begin{thm}\label{t-example} Let  $\phi:[a,\infty)\to \R$ be a smooth   function, for some $a$. Suppose $\phi$ is such that
\be\label{eq-example-1}
|\phi^{(l)}|\le \frac{C}{(1+t)^l}
\ee
for some constant $C$ for $0\le 1\le 4$. Then the metric given by \eqref{eq-AS-3} is AS outside $B(R)$ for some $R>0$. Moreover, if $\mathbf{b}\neq\mathbf{0}$, then the Corvino-Schoen center of mass exists if and only if
$
\lim_{t\to\infty}\lf(3\phi(t)- t\phi'(t)\ri)
$
exists. If $\lim_{t\to\infty}\lf(3\phi(t)- t\phi'(t)\ri)=\lambda$ exists, then
$$
\bccs=\frac{2\lambda\mathbf{b}}{3m}.
$$
\end{thm}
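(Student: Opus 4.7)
My plan is to set up the conformally flat decomposition, verify the AS condition, and then compute $\bccs^\alpha(r)$ directly.

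First, write $u = u_0 + v$ with $u_0 = 1 + m/(2r)$ and $v = \phi(r)\mathbf{b}\cdot\mathbf{x}/r^3$, so that $g_{ij} = u^4\delta_{ij}$. To verify that $p_{ij} = (u^4-u_0^4)\delta_{ij}=O_4(r^{-2})$, expand $u^4-u_0^4 = 4u_0^3 v + 6u_0^2 v^2 + 4u_0 v^3 + v^4$ and combine with the bound $|\partial^\beta v|\le Cr^{-2-|\beta|}$ for $|\beta|\le 4$, which follows from the hypothesis $|\phi^{(l)}|\le C(1+t)^{-l}$ (for $0\le l\le 4$) together with the fact that $\mathbf{b}\cdot\mathbf{x}/r^3$ is smooth and homogeneous of degree $-2$ away from the origin.

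For the conformally flat metric $g_{ij}=u^4\delta_{ij}$ one has
\[(g_{ij,i}-g_{ii,j})\nu_0^j = -\tfrac{2}{r}(u^4)_{,k}x^k, \qquad h_{i\alpha}\nu_0^i - h_{ii}\nu_0^\alpha = -\tfrac{2}{r}(u^4-1)x^\alpha\]
on $\{|x|=r\}$. Using $u_{0,k}x^k=-m/(2r)$ (rotationally symmetric) and the formula $v_{,k}x^k = (b_j x^j / r^3)(r\phi'(r) - 2\phi(r))$, I would expand $u^4 = u_0^4 + 4u_0^3 v + O(v^2)$ and integrate over the sphere. The pure Schwarzschild contributions consist of functions of $r$ alone multiplied by $x^\alpha$ (either explicitly or through $\nu_0^\alpha=x^\alpha/r$) and vanish by the parity $\int_{|x|=r}x^\alpha\,d\sigma_0 = 0$. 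The $v$-linear contributions reduce, via $\int_{|x|=r}x^\alpha x^k\,d\sigma_0 = (4\pi r^4/3)\delta^{\alpha k}$, to explicit multiples of $b^\alpha$, yielding
\[\int_{|x|=r}x^\alpha (g_{ij,i}-g_{ii,j})\nu_0^j\,d\sigma_0 = -\tfrac{32\pi}{3}\bigl(r\phi'(r)-2\phi(r)\bigr)b^\alpha + O(r^{-1}),\]
\[-\int_{|x|=r}(h_{i\alpha}\nu_0^i - h_{ii}\nu_0^\alpha)\,d\sigma_0 = \tfrac{32\pi}{3}\phi(r)\, b^\alpha + O(r^{-1}).\]

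Summing these and dividing by $16\pi m$ produces $\bccs^\alpha(r) = \frac{2}{3m}\bigl(3\phi(r)-r\phi'(r)\bigr)b^\alpha + O(r^{-1})$. The limit as $r\to\infty$ therefore exists if and only if $\lim_{t\to\infty}(3\phi(t)-t\phi'(t))$ exists, and when the latter equals $\lambda$ one obtains $\bccs = 2\lambda\mathbf{b}/(3m)$, as claimed. The main obstacle is the bookkeeping required when expanding $u^4$: one must carry enough subleading terms from $u_0^3 = 1 + 3m/(2r)+O(r^{-2})$ and from the cross-terms between $v$ and $u_0$ so that all lower-order contributions are absorbed into a uniform $O(r^{-1})$ remainder, and so that the two pieces of the integrand combine into precisely the announced combination $3\phi(r)-r\phi'(r)$ rather than some other linear combination of $\phi$ and $r\phi'$.
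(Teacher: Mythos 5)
Your proposal is correct and takes essentially the same route as the paper: the identical decomposition $u=1+\frac{m}{2r}+v$ with $v=\phi(r)\mathbf{b}\cdot\mathbf{x}/r^3$, the same derivative bounds giving the AS property, the same radial identity $v_{,k}x^k=\frac{\mathbf{b}\cdot\mathbf{x}}{r^3}\lf(r\phi'(r)-2\phi(r)\ri)$, and the same sphere integrals yielding $\ccs^\a(r)=\frac{2b^\a}{3m}\lf(3\phi(r)-r\phi'(r)\ri)+O(r^{-1})$. The only difference is organizational (you integrate the two boundary terms separately, while the paper combines the integrand pointwise before integrating), and your coefficients $\mp\frac{32\pi}{3}$ agree with the paper's computation.
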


\begin{remark} It is easy to construct $\phi$ satisfying \eqref{eq-example-1}, but the limit $\lim_{t\to\infty}\lf(3\phi(t)- t\phi'(t)\ri)$ does not exist. For example, we may take $\phi(t)= \sin(\log(t))$ or $\phi(t)=\sin(\log(\log(t))$. Note that similar examples for the nonexistence of center of mass have already been obtained independently by Cederbaum and   Nerz \cite[p.13]{CederbaumNerz2013}. We thank Cederbaum and Nerz for the information.
\end{remark}

\begin{proof}[Proof of Theorem \ref{t-example}] To simplify the notations, let
$$
v=\frac{\phi(r)\mathbf{b}\cdot \mathbf{x}}{r^3}
$$
and
$$
u=1+\frac r{2m}+v.
$$
Then $g_{ij}=u^4\delta_{ij}$. Now $|v|=O(r^{-2})$, and
\be
\frac{\p}{\p x^k}v=
 \frac{1}{r^3}\lf( \frac{x^k}r\phi'(r)\mathbf{b}\cdot \mathbf{x}+\phi(r)b^k - \frac{3x^k\phi(r)\mathbf{b}\cdot \mathbf{x}}{r^2}\ri).
\ee
By the assumption \eqref{eq-example-1}, we have $|\p v|=O(r^{-3})$. Similarly, one can prove that $|\p^2v|=O(r^{-4})$, $|\p^3v|=O(r^{-5})$, $|\p^4v|=O(r^{-6})$. From these, one can see that the metric $g$ is well-defined and is AS.

Next, we want to compute $\ccs^\a(r)$.
\bee
\begin{split}
g_{ij,k}=&4u^3\frac{\p u}{\p x^k}\delta_{ij}\\
=&4u^3\lf[-\frac{ mx^k}{2r^3}+ \frac{1}{r^3}\lf( \frac{x^k}r\phi'(r)\mathbf{b}\cdot \mathbf{x}+\phi(r)b^k - \frac{3x^k\phi(r)\mathbf{b}\cdot \mathbf{x}}{r^2}\ri) \ri]\delta_{ij}\\
=&f_k\delta_{ij}.
\end{split}
\eee
Hence
\be\label{eq-example-3}
\begin{split}
&\sum_{i,j}(g_{ij,i}-g_{ii,j}) x^j\\
=&-2\sum_j f_jx_j\sum_{i,j}(f_i\delta_{ij} -f_j\delta_{ii}) x^j\\
=&\sum_jf_jx^j-3\sum_j f_jx^j\\
=&-8u^3\sum_j x^j\lf[-\frac{ mx^j}{2r^3}+ \frac1{r^3}\lf( \frac{x^j}r\phi'(r)\mathbf{b}\cdot \mathbf{x}+\phi(r)b^j - \frac{3x^j\phi(r)\mathbf{b}\cdot \mathbf{x}}{r^2}\ri) \ri] \\
=&-8u^3\lf[-\frac{ m }{2r}+\frac{(r \phi'(r)-2\phi(r))\mathbf{b}\cdot \mathbf{x}} {r^3}\ri]\\
=&-8(1+\frac{3m}{2r})\lf[-\frac{ m }{2r}+\frac{(r \phi'(r)-2\phi(r))\mathbf{b}\cdot \mathbf{x}} {r^3}\ri]+O(r^{-3})\\
=&-8\lf[-\frac{ m }{2r}-\frac{3m^2}{4r^2}+\frac{(r \phi'(r)-2\phi(r))\mathbf{b}\cdot \mathbf{x}} {r^3}\ri]+O(r^{-3})\\
=& \frac{4 m }{r}+\frac{6m^2}{ r^2}-\frac{8(r\phi'(r)-2\phi(r))\mathbf{b}\cdot \mathbf{x}} {r^3} +O(r^{-3})
\end{split}
\ee
On the other hand, $h_{ij}=(u^4-1)\delta_{ij}$
Hence
\be
\begin{split}
\sum_ih_{i\a} x^i -h_{ii} x^\a=&-2(u^4-1) x^\a\\
=&-2x^\a\lf(\dfrac{ 2 m }{ r}+ \frac{3m^2}{2r^2}+\frac{4\phi(r)\mathbf{b}\cdot \mathbf{x}}{r^3}\ri)+O(r^{-3})\\
\end{split}
\ee

So
\be
\begin{split}
 x^\a\sum_{i,j}(g_{ij,i}-g_{ii,j}) x^j&-\lf(\sum_ih_{i1} x^i -h_{ii} x^1\ri)\\
= & x^\a\lf[\frac{8m}r+\frac{9m^2}{r^2}+\frac{8(3\phi(r)-r\phi'(r))\mathbf{b}\cdot \mathbf{x}} {r^3}\ri]+O(r^{-3})
\end{split}
\ee
Hence
\be
\frac1r\int_{|x|=r}x^1 (g_{ij,i}-g_{ii,j}) x^j -\lf(h_{i\a} x^i -h_{ii} x^1\ri)= \frac{32\pi b^\a}3\lf[3\phi(r)- r\phi'(r)\ri]+O(r^{-1})
\ee
$$
\ccs^\a(r)=\frac{2 b^\a}3\lf[3\phi(r)- r\phi'(r)\ri]+O(r^{-1}).
$$
From this the result follows.
\end{proof}
\begin{remark} (i) If $m<0$, the result is still true if we use the foliation of Ye \cite{Ye1996} to define the center of mass as in \eqref{eq-HY-1} and \eqref{eq-HY-2}.

(ii)  One can check the examples in the theorem satisfy  the property that $ \bccs(r) $ remain bounded for all $r$. On the other hand, in \cite{Huang2010}, Huang constructed examples of asymptotically flat manifold so that $\bccs(r)\to\infty$.
\end{remark}

\S{\bf 2}. The examples constructed in the previous section are very simple. However, there is a drawback. In fact, after the first draft of this work, Huang \cite{Huang} asked whether there is an example with nonnegative scalar curvature. Wang \cite{Wang} also pointed out that the above examples do not have nonnegative scalar curvature.   In this section, we give an affirmative answer to Huang's question. We will construct examples with nonnegative scalar curvature, asymptotically Schwarzschild, so that the Huisken-Yau center of mass and Corvino-Schoen center of mass do not exist. Note that by \cite{Huisken-Yau1996}, the scalar curvature must decay like $r^{-4}$. Given a function $f$ with this decay rate, it is not so difficult to construct asymptotically flat manifold with scalar curvature being $af$ for some positive constant $a$. However, in order to obtain asymptotically Schwarzschild metric, we need an addition assumption on $f$. We begin with the following:

\begin{lma}\label{l-existence-1} Let $f$ be a smooth function on $\R^3$ satisfying the following:
\begin{enumerate}
  \item [(i)] $f=O_3( |x|^{-4})$.
  \item [(ii)] There is a constant $C>0$ such that
  \be
  \lf|\int_{B(r)}x^\a f(x) dv_0\ri|\le C
  \ee
  for $\a=1, 2, 3$ and for all $r>0$, where $B(r)=B_0(r)$ is the Euclidean ball with center at the origin and with radius $r$.
\end{enumerate}
Then there is a smooth function $v$ such that $\Delta v=f$. Near infinity $v(x)=\frac{m}{2|x|}+w(x)$ for some constant $m$ and $w=O_4(|x|^{-2}).$
\end{lma}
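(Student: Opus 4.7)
The plan is to take $v$ to be the Newtonian potential
\[
v(x) = -\frac{1}{4\pi}\int_{\R^3}\frac{f(y)}{|x-y|}\,dy.
\]
Condition (i) gives $|f(y)| \le C(1+|y|)^{-4}$, which is integrable on $\R^3$, so the integral converges absolutely and, by standard potential theory, $v$ is smooth with $\Delta v = f$ on $\R^3$. Setting $m = -\frac{1}{2\pi}\int_{\R^3} f\,dy$ so that $m/(2|x|)$ captures the monopole part of $v$, I would define $w(x) = v(x) - m/(2|x|)$, which rewrites as
\[
w(x) = -\frac{1}{4\pi}\int_{\R^3}\left[\frac{1}{|x-y|}-\frac{1}{|x|}\right] f(y)\,dy.
\]
The task then reduces to showing $w = O_4(|x|^{-2})$ at infinity.

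To estimate $w$ and its $x$-derivatives, I would split the integration domain into an inner region $I = \{|y| \le |x|/2\}$ and an outer region $II = \{|y| > |x|/2\}$. On $II$, the pointwise bound $|f(y)| \le C|y|^{-4}$ combined with elementary estimates on $|x-y|^{-1}$, treating the subcase $|x-y|<|x|/4$ (where $|y|$ is forced close to $|x|$) separately, yields decay at the rate $|x|^{-2}$; differentiating under the integral sign, legitimate by (i), supplies one extra factor of $|x|^{-1}$ per derivative.

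On the inner region $I$, the key step is the Taylor expansion
\[
\frac{1}{|x-y|}-\frac{1}{|x|} = \frac{x\cdot y}{|x|^3} + R(x,y),
\]
where $|\partial_x^\beta R(x,y)| \le C|y|^2/|x|^{3+|\beta|}$ uniformly for $|y|\le|x|/2$. The remainder contributes at most $O(|x|^{-3-|\beta|})\int_I |y|^2|f(y)|\,dy = O(|x|^{-2-|\beta|})$, since $\int_I |y|^2|f|\,dy = O(|x|)$ from $|y|^2|f|\le C|y|^{-2}$ at infinity. The leading linear-in-$y$ term, after applying $\partial_x^\beta$, is a bounded smooth function of $x/|x|$ of order $|x|^{-2-|\beta|}$ multiplied by the truncated moment $\int_{|y|\le |x|/2} y^\alpha f(y)\,dy$.

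The main obstacle — and the whole purpose of hypothesis (ii) — is that this truncated moment does \emph{not} converge absolutely as $|x|\to\infty$, since $|y^\alpha f(y)| = O(|y|^{-3})$ fails to be integrable at infinity in $\R^3$. Hypothesis (ii) bounds it uniformly in $|x|$, which is precisely what is required to conclude that the main term is also $O(|x|^{-2-|\beta|})$. Running this argument for $|\beta|=0,1,2,3,4$, using $f=O_3(|x|^{-4})$ to justify differentiating under the integral, yields $w=O_4(|x|^{-2})$ as claimed.
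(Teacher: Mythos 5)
Your construction of $v$ and your $C^0$ analysis of $w$ coincide with the paper's: the paper also takes the Newtonian potential, splits the integral over $B_x(r/2)$, $B_0(r/2)$ and the complement (your inner/outer split with the near-singularity subcase is the same decomposition), expands $\frac{1}{|x-y|}-\frac{1}{|x|}=\frac{2x\cdot y-|y|^2}{|x|\,|x-y|\,(|x|+|x-y|)}$, and invokes hypothesis (ii) exactly where you do, to control the truncated first moment $\int_{|y|\le r/2}y^\a f\,dv_0$, which indeed need not converge as $r\to\infty$. Up to the estimate $|w|=O(|x|^{-2})$ your proposal is correct and essentially identical to the paper.

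The divergence, and the gap, is in the derivative estimates. You assert that differentiating under the integral sign is ``legitimate by (i)'', but condition (i) gives only decay of $f$ and its first three derivatives, and decay alone cannot justify this for $|\beta|\ge 2$: one has $|\p_x^\beta |x-y|^{-1}|\sim |x-y|^{-1-|\beta|}$, which fails to be locally integrable in $\R^3$ near $y=x$, so the differentiated integrals do not converge absolutely. (This is the classical phenomenon that the Newtonian potential of a merely continuous density need not be $C^2$.) Your argument can be repaired: write $\p_{x_i}|x-y|^{-1}=-\p_{y_i}|x-y|^{-1}$ and integrate by parts in the region near $y=x$ to transfer up to three derivatives onto $f$ --- this is precisely what $f=O_3(|x|^{-4})$ buys, and for $|\beta|=4$ it leaves one derivative on the kernel, which is still locally integrable --- while also tracking the boundary terms produced because your splitting $\{|y|\le |x|/2\}$ and the truncated moments depend on $x$ (these contribute surface integrals of size $O(|x|^{-1})$ by (i), harmless but not optional). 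The paper avoids this entire bookkeeping: having obtained $|w|=O(|x|^{-2})$, it observes that $\Delta w=f$ near infinity (since $1/|x|$ is harmonic there) and applies scaled interior Schauder estimates (Gilbarg--Trudinger, Theorem 6.2 and Problem 6.1) on balls $B_x(|x|/2)$, so that $|w|=O(|x|^{-2})$ together with $f=O_3(|x|^{-4})$ yields $w=O_4(|x|^{-2})$ in one stroke. Your route is workable once the integration by parts is made explicit, but as written the crucial step would fail for second and higher derivatives; the elliptic-regularity shortcut is both shorter and more robust.
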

\begin{proof} Let $v(x)$ be such that
$$
4\pi v(x)=-\int_{\R^3}\frac1{|x-y|}f(y)dv_0(y).
$$
Then $v$ is well-defined because of (i) and $\Delta v=f$. We want find the asymptotically behavior of $v$. For any $x\in \R^3$, let $r=|x|$. Suppose $r>1$.

\be
\begin{split}
\int_{\R^3}\frac1{|x-y|}f(y)dv_0(y)=&\lf(\int_{B_x(\frac r2)}+\int_{B_0(\frac r2)}+\int_{\R^3\setminus (B_x(\frac r2)\cup B_0(\frac r2))}\ri) \frac1{|x-y|}f(y)dv_0(y)\\
=&I+II+III.
\end{split}
\ee
Now
\be
|I|\le C_1 r^{-4}\int_{B_x(\frac r2)}\frac1{|x-y|} dv_0(y)\le C_2r^2.
\ee
Here and below, $C_i$ will denote positive constant which is independent of $x$ and $r$.

Since outside $B_x(\frac r2)$, $|x-y|\ge \frac r2$. Hence
\be
|III|\le C_3 r^{-1}\int_{\R^3\setminus   B_0(\frac r2) }|y|^{-4}dv_0(y)\le C_4r^{-2}.
\ee

To estimate $II$, let $a=  \int_{\R^3}fdv_0$. Note that $a$ is a finite number because of (i). For $r>1$,
\be
II-\frac{a}{r}=-\frac1r\int_{\R^3\setminus B_0(\frac r2)} fdv_0+ \int_{B_0(\frac r2)}\lf(\frac1{|x-y|}-\frac1{|x|}\ri)f(y)dv_0(y)=IV+V.
\ee
Now $|IV|\le C_5 r^{-2}$. Since $|x-y|\ge \frac r2$ for $y\in B_0(\frac r2)$, we have
\be
\begin{split}
  |V|=&\lf|\int_{B_0(\frac r2)}\lf(\frac{ 2x\cdot y-|y|^2}{|x|\,|x-y|\,(|x|+|x-y|)}\ri)f(y)dv_0(y)\ri|\\
\le &\lf|\int_{B_0(\frac r2)}\lf(\frac{ 2x\cdot y }{|x|\,|x-y|\,(|x|+|x-y|)}\ri)f(y)dv_0(y)\ri|+C_6r^{-2}
\end{split}
\ee
Now for $y\in B_0(\frac r2)$,
$$
\lf|\frac1{|x|}-\frac1{|x-y|}\ri|\le C_7 r^{-2}|y|,
$$
and
$$
\lf|\frac1{2|x|}-\frac1{|x|+|x+y|}\ri|n\le C_8r^{-2}|y|.
$$
So
\be
\begin{split}
\bigg|\int_{B_0(\frac r2)}&\lf(\frac{ 2x\cdot y }{|x|\,|x-y|\,(|x|+|x-y|)}\ri)f(y)dv_0(y)\bigg|\\
\le& C_9r^{-2}+\sum_\a\frac {|x^\a|}{r^3}\lf|\int_{B_0(\frac r2)}y^\a f(y)dv_0(y)\ri|\\
\le & C_{10} r^{-2}
\end{split}
\ee
by assumption (ii). Hence one can see that near infinity
$$
v(x)=\frac{m}{2|x|}+ w(x)
$$
for some smooth $w(x)$ so that $|w(x)|=O(|x|^{-2})$. Since $\Delta \frac1{|x|}=0$, we still have
$$
\Delta w=f
$$
near infinity. By interior Schauder estimate \cite[Theorem 6.2 and problem 6.1]{GilbargTrudinger}, and assumption (i), we conclude that the lemma is true.
\end{proof}
\begin{remark}\label{rem-decay} From the computation in the lemma, one can see that if we only assume that $|f(x)|\le C|x|^{-4}$, then we can only  have the following estimate of $v$.
$$
v(x)=\frac{m}{2|x|}+O(|x|^{-2}\log |x|).
$$
In any case, we still have $v(x)=O(|x|^{-1})$.
\end{remark}
\begin{thm}\label{t-existence} Let $K(x)$ be a smooth function on $\R^3$ satisfying the following:
\begin{enumerate}
  \item [(i)] $K(x)=O_3( |x|^{-4})$.
  \item [(ii)]  There is a constant $C>0$ such that
  \be
  \lf|\int_{B(r)}x^\a K(x) dv_0\ri|\le C
  \ee
  for $\a=1, 2, 3$ and for all $r>0$.
  \end{enumerate}
  Then there is a smooth positive function $u$ such that near infinity $u(x)=1+\frac{m}{2|x|}+w(x)$
  with $w(x)=O_4(|x|^{-2})$ and the conformally flat asymptotically Schwarzschild metric
  $$
  g_{ij}=u^4\delta_{ij}
  $$
  has scalar curvature $R(x)=a K(x)$ for some positive constant $a$.
\end{thm}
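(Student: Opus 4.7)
The plan is as follows. For a conformally flat metric $g = u^4 \delta$ on $\R^3$, a direct computation gives $R_g = -8 u^{-5}\Delta u$, so the theorem reduces to solving the nonlinear elliptic equation
\begin{equation*}
\Delta u = -\frac{a}{8}\, u^5 K \qquad \text{on } \R^3
\end{equation*}
for some small constant $a > 0$, with $u > 0$ everywhere and $u \to 1$ at infinity. The natural approach is a fixed-point iteration that uses Lemma \ref{l-existence-1} as the linear solver.

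I would write $u = 1 + v$ and seek $v$ in a Banach space $X$ of $C^{3,\alpha}$ functions on $\R^3$ satisfying $v = O_3(|x|^{-1})$, equipped with a suitable weighted norm. Define $T: X \to X$ by letting $T(v)$ be the solution to $\Delta T(v) = -\frac{a}{8}(1+v)^5 K$ furnished by Lemma \ref{l-existence-1}. To apply the lemma one must verify that $f := -\frac{a}{8}(1+v)^5 K$ satisfies conditions (i) and (ii). Condition (i) is immediate, since $(1+v)^5$ has bounded $C^3$ norm on $\R^3$ when $v$ lies in a bounded set of $X$, and $K = O_3(|x|^{-4})$ by hypothesis. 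For (ii), I would decompose $(1+v)^5 K = K + \bigl[(1+v)^5 - 1\bigr] K$; the $K$ term is controlled by the hypothesis on $K$, while the estimate
\begin{equation*}
\bigl| x^\alpha [(1+v)^5 - 1] K \bigr| \le C\, |x| \cdot (1+|x|)^{-1} \cdot (1+|x|)^{-4} \le C(1+|x|)^{-4}
\end{equation*}
shows that the moment of the correction is absolutely integrable on $\R^3$, hence bounded by a constant depending only on the $X$-norm bound for $v$.

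For $a > 0$ sufficiently small, $T$ should map a closed ball of $X$ of some fixed radius to itself and be a contraction: the pointwise estimate $|(1+v_1)^5 - (1+v_2)^5| \le C|v_1 - v_2|$ on the ball, combined with the Newtonian-potential bounds inside the proof of Lemma \ref{l-existence-1}, gives a Lipschitz constant of order $a$. Banach's fixed-point theorem then produces $v^* \in X$ with $T(v^*) = v^*$. Setting $u = 1 + v^*$ yields a smooth positive function (positive since $\|v^*\|_\infty$ is small for small $a$) satisfying $\Delta u = -\frac{a}{8} u^5 K$, hence $R_g = aK$. Applying the asymptotic part of Lemma \ref{l-existence-1} to $v^*$ then yields $u(x) = 1 + \frac{m}{2|x|} + w(x)$ near infinity with $w = O_4(|x|^{-2})$, as required.

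The main obstacle is the nonlinearity: one must choose the norm on $X$ and the ball radius so that (a) the constants implicit in conditions (i) and (ii) of Lemma \ref{l-existence-1} remain uniform across all iterates, so the lemma can be applied on the full ball with a fixed loss, and (b) the map $T$ is a strict contraction. Both reduce to quantitative bookkeeping of the Newtonian-potential estimates already carried out in the proof of Lemma \ref{l-existence-1}, with the interior Schauder estimates cited there promoting the $O(|x|^{-2})$ decay of $w$ to the full $C^4$ decay claimed.
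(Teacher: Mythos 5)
Your argument is correct in outline, but it is a genuinely different route from the paper's. You solve the nonlinearity by a small-parameter contraction: treating $a$ as a free smallness constant, iterating the Newtonian-potential solver of Lemma \ref{l-existence-1} on a ball in a weighted $C^{3,\alpha}$ space, and getting positivity of $u=1+v^*$ from $\|v^*\|_\infty$ small. The paper instead invokes Ni's global existence theorem \cite[Theorem 1.4]{Ni-1982} to produce a positive solution $u$ of $8\Delta u+Ku^5=0$ with $b\le u\le 1/b$, and then uses a Liouville argument: letting $v$ be the potential solution of $\Delta v=-\frac18 Ku^5$ (which by Remark \ref{rem-decay} is $O(|x|^{-1})$ using only the crude decay of $Ku^5$), the difference $u-v$ is a bounded harmonic function, hence constant, so $u=c+v$; the constant $a=c^4$ then arises from the normalization $u\mapsto u/c$ rather than from any smallness requirement. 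A consequence of the paper's route is that it must recover derivative decay a posteriori — energy estimates via cutoffs, the mean value inequality, and interior Schauder estimates to get $|\p u|=O(|x|^{-1})$, $|\p\p u|=O(|x|^{-2})$, $|\p^3 u|=O(|x|^{-3})$, hence $u^5K=O_3(|x|^{-4})$ — whereas your weighted space builds this decay into the iteration from the start. The key moment-verification step is identical in both arguments: the decomposition $u^5K=K+(u^5-1)K$ with $|x^\a(u^5-1)K|\le C(1+|x|)^{-4}$ integrable, so condition (ii) of Lemma \ref{l-existence-1} holds for the full right-hand side, and the final asymptotics $u=1+\frac{m}{2|x|}+w$, $w=O_4(|x|^{-2})$, come from applying the lemma at the solution. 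What each approach buys: yours is self-contained and elementary, avoiding the citation of Ni at the cost of smallness of $a$ and the contraction bookkeeping (both admissible, since the statement only asks for \emph{some} $a>0$); the paper's is shorter, works without any smallness hypothesis on $a$, and outsources the nonlinear existence entirely, but needs the extra a posteriori regularity step.
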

\begin{proof} By \cite[Theorem 1.4]{Ni-1982}, there is a smooth positive solution $u$ of
$$
8\Delta u+Ku^5=0
$$
such that $b\le u\le \frac1b$ for some constant $b>0$ and $u$ tends to a constant near infinity.  Let $v$ be the solution of
$$
 \Delta v=-\frac18Ku^5
$$
as in Lemma \ref{l-existence-1}.
Since $|K(x)u^5(x)|\le C|x|^{-4}$ for some constant $C$ for all $x$, we have $v(x)=O(|x|^{-1})$, see Remark \ref{rem-decay}. So $u-v$ is bounded harmonic function and must be constant. Hence  $u=c+v$ for some positive constant $c$. Replacing, $u$ by $u/c$, still denoted by $u$, we see that
$$
8\Delta u+aKu^5=0
$$
for some $a>0$. Moreover, $u=1+v$, where $v$ is the solution of
$8\Delta v=-aKu^5$ constructed in Lemma \ref{l-existence-1}.

We will estimate the derivatives of $u$.  Using the fact that $u$ is bounded and condition (i) and using cutoff function argument, one can conclude that $\int_{B_x(1)}|\nabla u|^2dv_0\le C_1$ for some constant independent of $x$. Now $u_i=\frac{\p u}{\p x^i}$ satisfies:
$$
\Delta u_i+5u^4K u_i+u^5\frac{\p K}{\p x_i}=0.
$$
By the mean value inequality \cite[Theorm 8.15]{GilbargTrudinger}, we conclude that $|\nabla u|$ is uniformly bounded. By interior Schauder estimates and (i), one can conclude that $|\p  u| =O(|x|^{-1})$, $|\p\p u| =O(|x|^{-2})$, $|\p\p\p u|=O(|x|^{-3})$.  Hence $u^5K=O_3(|x|^{-4})$.

 On the other hand, we know that $u(x)=1+O(|x|^{-1})$ by Remark \ref{rem-decay}. Since $|K(x)|\le C(|x|^{-4})$, $|x^\a(u^5(x)-1)K(x)|$ is integrable. By assumption (ii), we conclude that there is a constant $C$ such that
$$
\lf|\int_{B_0(r)}x^\a K(x)u^5(x) dv_0\ri|\le C
$$
for all $r>0$ for $\a=1,2,3$. By Lemma \ref{l-existence-1}, we conclude that
$$
u(x)=1+v(x)=1+\frac{m}{2|x|}+w(x)
$$
with $w(x)=O_4(|x|^{-2})$. Hence $g_{ij}=u^4\delta_{ij}$  is asymptotically Schwarzschild with scalar curvature $R(x)=aK(x)$ for some $a>0$.
\end{proof}
Combining this with Theorem \ref{thm-CS}, we have:

\begin{cor}\label{cor-example} Suppose $K\ge0$ is a smooth function on $\R^3$ satisfying the following: \begin{enumerate}
  \item [(i)] $K(x)=O_3( |x|^{-4})$.
  \item [(ii)]  There is a constant $C>0$ such that
  \be
  \lf|\int_{B(r)}x^\a K(x) dv_0\ri|\le C
  \ee
  for $\a=1, 2, 3$ and for all $r>0$.
  \item[(iii)] For some $\a=1, 2, 3$,
  $
  \lim_{r\to\infty} \int_{B(r)}x^\a K(x) dv_0
  $  does not exist.
  \end{enumerate}
Let $u$ be the positive function obtained in Theorem \ref{t-existence}. Then the asymptotically Schwarzschild metric $g_{ij}=u^4\delta_{ij}$ will have nonnegative scalar curvature so that the Corvino-Schoen center of mass and hence the Husiken-Yau center of mass does not exist.
\end{cor}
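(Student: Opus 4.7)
The plan is to simply assemble the machinery already built: apply Theorem \ref{t-existence} to produce the conformally flat AS metric $g_{ij}=u^4\delta_{ij}$ with scalar curvature $R_g=aK$ for some $a>0$, then use Theorem \ref{thm-CS} to reduce the existence question for $\bccs$ to the existence of $\lim_{r\to\infty}\int_{B(r)}x^\a R_g\,dv_g$, and finally show that this limit inherits the nonexistence from the corresponding limit involving $K$ against Euclidean volume given by hypothesis (iii). Nonnegativity of the scalar curvature is immediate since $K\ge 0$ and $a>0$.

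First I would invoke Theorem \ref{t-existence} to obtain $u>0$ with $u=1+\frac{m}{2|x|}+w$, $w=O_4(|x|^{-2})$, producing an AS metric $g_{ij}=u^4\delta_{ij}$ with $R_g=aK\ge 0$. For this conformally flat metric, $dv_g=u^6\,dv_0$, so
\be
\int_{B(r)} x^\a R_g\,dv_g = a\int_{B(r)} x^\a K(x) u^6(x)\,dv_0.
\ee
Next I would compare this with $a\int_{B(r)}x^\a K\,dv_0$. Since $u^6-1=O(|x|^{-1})$ and $K=O(|x|^{-4})$, the difference of integrands satisfies
\be
\bigl| x^\a K(x)(u^6(x)-1)\bigr| \le C|x|\cdot |x|^{-4}\cdot |x|^{-1} = C|x|^{-4},
\ee
which is integrable at infinity on $\R^3$. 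Consequently the improper integral $\int_{\R^3} x^\a K(u^6-1)\,dv_0$ converges absolutely, so
\be
\lim_{r\to\infty}\int_{B(r)}x^\a R_g\,dv_g \text{ exists} \iff \lim_{r\to\infty}\int_{B(r)}x^\a K(x)\,dv_0 \text{ exists}.
\ee

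By assumption (iii), the right-hand limit fails to exist for some $\a$, so the left-hand limit also fails to exist for that $\a$. Theorem \ref{thm-CS} then forces $\bccs$ not to exist, and Proposition \ref{Huang} propagates the nonexistence to $\bchy$. The only nontrivial step is the integrability bound on $x^\a K(u^6-1)$; it goes through precisely because the order-$|x|^{-1}$ asymptotics of $u-1$ produced by Theorem \ref{t-existence} combines with the factor $x^\a=O(|x|)$ and the decay $K=O(|x|^{-4})$ to yield an $|x|^{-4}$ tail, which is Euclidean-integrable in three dimensions. No other estimates are needed, and the conclusion follows at once.
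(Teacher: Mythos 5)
Your proposal is correct and follows essentially the same route as the paper, which proves the corollary simply by combining Theorem \ref{t-existence} with Theorem \ref{thm-CS} (and Proposition \ref{Huang} for $\bchy$); the paper leaves implicit exactly the bridging step you supply, namely that $\int_{B(r)}x^\a R_g\,dv_g = a\int_{B(r)}x^\a K u^6\,dv_0$ differs from $a\int_{B(r)}x^\a K\,dv_0$ by an absolutely convergent integral, since $x^\a K(u^6-1)=O(|x|^{-4})$ is Euclidean-integrable in $\R^3$. Your absolute-convergence argument is the natural and correct way to fill that gap, so nothing further is needed.
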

Hence in order to find examples of conformally flat asymptotically Schwarzschild metric on $\R^3$ with nonnegative scalar curvature so that the Corvino-Schoen center of mass and hence the Husiken-Yau center of mass does not exist, it is sufficient to find $K(x)$ satisfying the assumptions of the Corollary.
\vskip .3cm

{\bf Example}: Let us construct $K(x)$ satisfying the conditions in the Corollary. Let $\phi$ be a smooth function depending only on $r=|x|$ such that $\phi(x)=\frac{\cos(\log |x|)}{|x|^5}$ outside $|x|\ge 1$, say. Let $\eta$ be another positive function depending only on $r=|x|$ such that $\eta(x)=\frac{1}{|x|^4}$ outside $r\ge1$. Let $ \mathbf{b}$ be a nonzero vector in $\R^3$. Then one can find a positive constant $A$ such that
$$
K(x)=\phi(x)\mathbf{b}\cdot x  +A\eta(x)
$$
is positive on $\R^3$. Then one can check that $K$ satisfies (i) in Corollary \ref{cor-example}. For $\a=1, 2, $ or 3, and for any $r>0$,
$$
\int_{B(r)}x^\a  \eta(x)dv_0=0
$$
because $\eta$ depends only on $r$. Hence  for $r$ large enough
\bee
\begin{split}
\int_{B(r)}x^\a K(x)dv_0=&\int_{B(r)}x^\a\phi(x)\mathbf{b}\cdot x dv_0\\
&\int_{B(1)}x^\a\phi(x)\mathbf{b}\cdot x dv_0+b^\a\int_1^r\frac{\cos(\log t)}{t^5}\lf(\int_{S(t)}
 (x^\a)^2 d\sigma_t\ri)dt\\
 =&\int_{B(1)}x^\a\phi(x)\mathbf{b}\cdot x dv_0+b^\a \int_1^r \frac{\cos(\log t)}{t^5}\cdot \frac{4\pi t^4}{3}dt\\
 =&\int_{B(1)}x^\a\phi(x)\mathbf{b}\cdot x dv_0+\frac{4\pi b^\a}{3}\sin(\log r)
\end{split}
\eee
where $S(t)=\{x\in \R^3|\ |x|=t\}$ and $d\sigma_t$ is the area element of $S(t)$. Hence $K$  satisfies (ii) and  (iii) in Corollary \ref{cor-example}.

\vskip .3cm

{\sl Acknowledgement}: The authors would like to thank Justin Corvino, Lan-Hsuan Huang, Mu-Tao Wang and Rugang Ye for useful discussions. We would also like to thank Carla Cederbaum and Christopher Nerz for bringing our attention to their paper \cite{CederbaumNerz2013}.

\end{document}